\newtheorem{theorem}{Theorem}
\newtheorem{corollary}{Corollary}
\theoremstyle{definition}
\newtheorem{definition}{Definition}
\newcommand{\w}{\omega}
\newcommand{\Ra}{\Rightarrow}
\newcommand{\U}{\mathcal U}
\newcommand{\V}{\mathcal V}
\title[Scattered    compact    sets     in    \v    Cech-complete
spaces]{Scattered  compact  sets  in  continuous  images  of   \v
Cech-complete spaces}
\author{Taras Banakh, Bogdan Bokalo, Vladimir Tkachuk}
\address{T.Banakh: Jan  Kochanowski  University  in Kielce (Poland) and Ivan  Franko  National  University  of  Lviv (Ukraine)}
\email{t.o.banakh@gmail.com}
\address{B.Bokalo:   Ivan  Franko  National  University  of  Lviv
(Ukraine)} \email{b.m.bokalo@gmail.com}
\address{V.Tkachuk:  Universidad Aut\'onoma Metropolitana (Mexico
City, Mexico)} \email{vova@xanum.uam.mx}
\keywords{ \v  Cech-complete space, scattered space, $k$-scattered
space,     K-analytic     space}     \subjclass{54H05,     03E15,
03E17}\dedicatory{Dedicated  to  the  80-th  birthday   of   A.V.
Arhangel'skii}
\begin{document}
\begin{abstract} Assume that  a  functionally Hausdorff space $X$
is a continuous image of a \v Cech complete space $P$  such  that
$l(P)<\mathfrak   c$.    Then   the   following   conditions  are
equivalent:  (i) every compact  subset  of $X$ is scattered, (ii)
for every continuous map $f:X\to Y$ to a  functionally  Hausdorff
space   $Y$  the  image  $f(X)$  has  cardinality  not  exceeding
$\max\{l(P),\psi(Y)\}$, (iii) no  continuous map $f:X\to[0,1]$ is
surjective.  We also prove the equivalence of the conditions: (a)
$\w_1<\mathfrak b$, and (b) a K-analytic space $X$ 
(with a unique non-isolated point) is countable iff
every  compact  subset  of   $X$   is   countable.
\end{abstract}

\maketitle

A  topological  space  $X$  is {\em scattered} if every non-empty
subspace of $X$ contains an isolated point.

\begin{definition} A topological space $X$ is {\em $k$-scattered}
if every compact subspace of $X$ is scattered. \end{definition}

By \cite[p. 34]{CD}, a \v  Cech-complete space is $k$-scattered if
and only if it is scattered. We recall that a Tychonoff space $X$
is {\em \v Cech-complete} if $X$  is  a  $G_\delta$-set  in  some
(equivalently,     any)     compactification    of    $X$,    see
\cite[3.9.1]{Eng}. A topological space  is  {\em Polish} if it is
homeomorphic  to  a  separable  complete  metric  space.  It   is
well-known  \cite[4.3.26]{Eng}  that  each  Polish  space  is  \v
Cech-complete.

The  following  well-known  characterization of scattered compact
Hausdorff   spaces   can   be    found   in   \cite[\S3]{A2}   or
\cite[8.5.4]{Sem}.

\begin{theorem}\label{t:compact} For a  compact  Hausdorff  space
$X$ the following conditions are equivalent:

\begin{enumerate}

\item $X$ is scattered;

\item for any continuous map $f:X\to Y$ to a metrizable space $Y$
the image $f(X)$ is at most countable;

\item no continuous map $f:X\to[0,1]$ is surjective.
\end{enumerate}

\end{theorem}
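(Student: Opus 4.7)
The plan is to prove the chain $(1)\Ra(2)\Ra(3)\Ra(1)$, with the middle implication immediate because $[0,1]$ is metrizable and uncountable. The substantive work sits at the two endpoints.

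For $(1)\Ra(2)$, I would first establish the intermediate fact that any continuous Hausdorff image of a scattered compact Hausdorff space is scattered. Combined with the classical observation that a scattered compact metrizable space is countable (the transfinite sequence of its Cantor--Bendixson derivatives terminates after countably many steps, each level being countable), this finishes the implication. For the scattered-image lemma I would use a Zorn/minimality argument: given a non-empty closed $K\subseteq f(X)$, pick a closed $L\subseteq f^{-1}(K)$ minimal for the property $f(L)=K$ (compactness yields the lower bound of a descending chain); since $L$ is scattered, pick an isolated point $x\in L$, so $L\setminus\{x\}$ is closed in $X$. Minimality forces some $y\in K$ to lie outside the compact set $f(L\setminus\{x\})$, but the only possible preimage of $y$ in $L$ is $x$, so $y=f(x)$ and $\{f(x)\}=K\setminus f(L\setminus\{x\})$ is relatively open in $K$.

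For $(3)\Ra(1)$, I argue contrapositively: if $X$ is compact Hausdorff and not scattered, I construct a continuous surjection $X\to[0,1]$. Pass to a non-empty closed $P\subseteq X$ with no isolated points. Using regularity together with the lack of isolated points, inductively choose non-empty open sets $\{U_s:s\in 2^{<\w}\}$ in $P$ satisfying $\overline{U_{s0}}\cup\overline{U_{s1}}\subseteq U_s$ and $\overline{U_{s0}}\cap\overline{U_{s1}}=\emptyset$. Set
$$C=\bigcap_{n\in\w}\bigcup_{s\in 2^n}\overline{U_s}.$$
Each $x\in C$ lies in a unique branch of the tree, producing a map $\sigma:C\to 2^{\w}$ whose continuity is built into the construction and whose surjectivity follows from compactness via the finite intersection property. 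Composing $\sigma$ with the standard continuous surjection $2^{\w}\to[0,1]$ given by $\alpha\mapsto\sum_n\alpha(n)2^{-n-1}$ yields a continuous surjection $g:C\to[0,1]$. Since $X$ is compact Hausdorff and therefore normal, Tietze's extension theorem extends $g$ to a continuous $\tilde g:X\to[0,1]$, necessarily still surjective.

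The main obstacle is the dyadic tree construction in $(3)\Ra(1)$: one must verify that inside any non-empty open subset of a perfect compact Hausdorff space there are two non-empty open subsets with disjoint closures (which rests on regularity plus the absence of isolated points), and that the resulting branch map is well-defined and continuous. Once the map $C\to[0,1]$ is in hand, the Tietze extension step is routine.
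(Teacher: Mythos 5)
Your proof is correct. Note that the paper does not actually prove Theorem~\ref{t:compact}: it is quoted as a known result with references to Arhangel'skii and to Semadeni, so there is no in-paper argument to compare against line by line. That said, your argument is a sound self-contained proof. The $(1)\Rightarrow(2)$ step (minimal closed preimage via Zorn's lemma to show that Hausdorff continuous images of scattered compacta are scattered, plus the Cantor--Bendixson count for second countable scattered spaces) is the standard route and is carried out correctly; the only points worth making explicit are that the chain condition in Zorn's lemma needs the fibrewise compactness argument you allude to, and that it suffices to find isolated points in closed subspaces. Your $(3)\Rightarrow(1)$ step is, in essence, the very technique the paper deploys elsewhere: the dyadic tree of open sets with disjoint closures and the resulting branch map onto $2^\w$ is exactly the skeleton of the paper's proof of Theorem~\ref{l:key} (conditions $(b_s)$, $(c_s)$ there), and the Tietze--Urysohn extension of a surjection $K\to[0,1]$ to all of $X$ is exactly the paper's argument for $(4)\Rightarrow(1)$ in Theorem~\ref{t:KA}. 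So your blind reconstruction matches the toolkit the authors themselves rely on; no gaps.
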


In  this  paper we generalize this characterization to K-analytic
spaces, more generally, to functionally Hausdorff spaces $X$ with
analyticity number $\alpha(X)<\mathfrak  c$.   Here $\mathfrak c$
stands for the cardinality of the continuum.

We recall that a  topological  space  $X$  is  {\em  functionally
Hausdorff\/}  if  for any distinct points $x,y\in X$ there exists a
continuous map $f:X\to[0,1]$ such that $f(x)=0$ and $f(y)=1$.

Next, we  recall  the  definitions  of  some cardinal topological
invariants.

The {\em Lindel\"of number} $l(X)$  of a topological space $X$ is
a smallest infinite cardinal $\kappa$ such that  any  open  cover
$\U$  of  $X$ has a subcover $\mathcal V\subset\U$ of cardinality
not exceeding $\kappa$.  A  topological  space $X$ is called {\em
Lindel\"of\/} if $l(X)\le\w$.

The {\em pseudocharacter} $\psi(X)$ of a topological space $X$ is
the  smallest  cardinal $\kappa$ such that for any point $x\in X$
the family $\tau_x$ of all  open  neighborhoods of $x$ contains a
subfamily $\U_x\subset\tau_x$  of  cardinality  $|\U_x|\le\kappa$
such that $\bigcap\U_x=\bigcap\tau_x$.

A topological space $X$ is  defined to be 
\begin{itemize}

\item  {\em  analytic}  if  $X$ is a continuous image of a Polish
space;

\item  {\em  K-analytic}  if  $X$  is  a  continuous  image  of a
Lindel\"of \v Cech-complete space;

\item {\em $\kappa$-analytic} for a cardinal $\kappa$ if $X$ is a
continuous image of a \v Cech-complete space $P$ such that
$l(P)\le\kappa$.

\end{itemize}

Analytic  and K-analytic spaces play an important role in General
Topology \cite{A3},  \cite{Hansell}, 
Descriptive  Set  Theory \cite{Ke}, \cite{RJ}
and Functional Analysis \cite{KKP}.

For a topological space $X$ its {\em  analyticity}  $\alpha(X)$  is
defined as the smallest cardinal $\kappa$ for which the space $X$
is  $\kappa$-analytic.  Since  every  topological  space $X$ is a
continuous image  of  a  discrete  (and  hence  \v Cech-complete)
space, the analyticity is a well-defined  cardinal  invariant  such
that  $$l(X)\le  \alpha(X)\le  |X|+\w.$$  For  a \v Cech-complete
space its analyticity is  equal  to  the Lindel\"of number. Observe
that a topological space $X$  is  $K$-analytic  if  and  only  if
$\alpha(X)=\w$.

\begin{theorem}\label{l:key}  Let  $f:X\to Y$ be a continuous map
from a \v  Cech-complete  space  $X$  to a functionally Hausdorff
space $Y$. If $|f(X)|>\max\{\psi(Y),l(X)\}$, then there exists  a
compact  subset $K\subset X$ whose image $f(K)$ is not scattered.
\end{theorem}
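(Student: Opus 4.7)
The plan is to prove the contrapositive by constructing a Cantor scheme in $\beta X$ whose limit is a compact set $K\subset X$ with $f(K)$ mapping continuously onto the Cantor set $2^\w$, forcing $f(K)$ to be non-scattered via Theorem~\ref{t:compact}. Set $\kappa=\max\{\psi(Y),l(X)\}$ and write $X=\bigcap_{n\in\w}G_n$ with $G_n$ a decreasing sequence of open subsets of $\beta X$. By induction on $|s|$ I would produce, for each $s\in 2^{<\w}$, a compact set $F_s\subset\beta X$ with $F_s\subset G_{|s|}$, $F_s\cap X\ne\emptyset$, and $|f(F_s\cap X)|>\kappa$, together with a continuous $h_s:Y\to[0,1]$ and disjoint compact children $F_{s0},F_{s1}\subset F_s$ satisfying $h_s(f(F_{s0}\cap X))\subset[0,1/3]$ and $h_s(f(F_{s1}\cap X))\subset[2/3,1]$.

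Granting such a scheme, $K=\bigcap_n\bigcup_{s\in 2^n}F_s$ is compact and sits inside $\bigcap_n G_n=X$. For any two distinct branches $\sigma,\tau\in 2^\w$ the functional separation at their first disagreement puts $f(\bigcap_n F_{\sigma\restriction n})$ and $f(\bigcap_n F_{\tau\restriction n})$ into disjoint halves of $[0,1]$ under the corresponding $h_s$, so the Cantor branching map factors through $f|_K$ as a continuous surjection $f(K)\to 2^\w$. Composing with the standard surjection $2^\w\to[0,1]$ gives a continuous surjection $f(K)\to[0,1]$, and Theorem~\ref{t:compact} then forbids $f(K)$ from being scattered.

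The crux is the inductive step, which I would isolate as a splitting lemma: for any closed $A\subset X$ with $l(A)\le\kappa$ and $|f(A)|>\kappa$ there exist disjoint closed $A_0,A_1\subset A$ and a continuous $h:Y\to[0,1]$ with $f(A_i)\subset h^{-1}(J_i)$ for $J_0=[0,1/3]$ and $J_1=[2/3,1]$, and $|f(A_i)|>\kappa$ on both sides. To prove it I would introduce the $\kappa$-condensation set
\[
B^{\mathrm d}=\{y\in Y:\,|V\cap f(A)|>\kappa\text{ for every open }V\ni y\},
\]
which is closed in $Y$, and establish the bound $|f(A)\setminus B^{\mathrm d}|\le\kappa$. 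Once this is in hand one has $|f(A)\cap B^{\mathrm d}|>\kappa$, picks distinct $y_0,y_1\in f(A)\cap B^{\mathrm d}$, separates them by a continuous $h:Y\to[0,1]$ with $h(y_i)=i$ using functional Hausdorffness, and sets $A_i=A\cap(h\circ f)^{-1}(J_i)$; the condensation of $y_i$ forces $|f(A_i)|\ge|h^{-1}(\mathrm{int}\,J_i)\cap f(A)|>\kappa$. Taking closures in $\beta X$ and shrinking by regularity of $\beta X$ together with the $G_\delta$ presentation of $X$, with one further application of $l(X)\le\kappa$ to preserve the large-image invariant, then puts the resulting compacta into $G_{|s|+1}$.

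The main obstacle is the cardinality bound $|f(A)\setminus B^{\mathrm d}|\le\kappa$. The natural argument covers $f(A)\setminus B^{\mathrm d}$ by the witnessing neighbourhoods $V_y$, each meeting $f(A)$ in at most $\kappa$ points, and extracts a subcover of cardinality $\le\kappa$; but $f(A)\setminus B^{\mathrm d}$ is only open in $f(A)$, not in $Y$, so the Lindel\"of argument must be pushed through the open subspace $A\setminus f^{-1}(B^{\mathrm d})$ of $A$. This is where \v Cech-completeness of $A$ is essential, since it is inherited by this open subspace as a $G_\delta$ in $\beta X$ and, together with $l(A)\le\kappa$, yields a $\kappa$-subcover whose union meets $f(A)$ in at most $\kappa\cdot\kappa=\kappa$ points. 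Once the bound is secured, the remaining details of the Cantor-scheme construction are routine.
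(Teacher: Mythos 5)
Your overall architecture (Cantor scheme in a compactification, limit compactum $K\subset X$ since the scheme stays inside the sets $G_n$, continuous surjection $f(K)\to 2^\w\to[0,1]$, then Theorem~\ref{t:compact}) matches the paper's proof. But the splitting lemma, which you correctly identify as the crux, is false as you state it, and the proposed repair of its key bound does not work. The bound $|f(A)\setminus B^{\mathrm d}|\le\kappa$ fails in general: take $A=X=Y=\w_1+1$ with the order topology, $f=\mathrm{id}$, $\kappa=\w$. Then $A$ is compact (so \v Cech-complete with $l(A)=\w$) and $|f(A)|=\w_1>\kappa$, yet $B^{\mathrm d}=\{\w_1\}$, so $|f(A)\setminus B^{\mathrm d}|=\w_1>\kappa$; and indeed no splitting into two disjoint closed uncountable pieces exists, since any closed subset of $\w_1+1$ missing the top point is bounded, hence countable. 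Your proposed mechanism for the bound --- that $A\setminus f^{-1}(B^{\mathrm d})$ is an open, hence \v Cech-complete, subspace and therefore admits a $\kappa$-subcover --- is exactly where this breaks: \v Cech-completeness of an open subspace gives no control of its Lindel\"of number ($\w_1$ is a \v Cech-complete open subspace of $\w_1+1$ with $l(\w_1)=\w_1$), and an open subspace of a space with $l\le\kappa$ need not have $l\le\kappa$.

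The symptom of the gap is that your splitting lemma never uses $\psi(Y)$, whereas the hypothesis $\psi(Y)\le\kappa$ is indispensable (in the example above $\psi(\w_1+1)=\w_1$, which is why the theorem's hypothesis is not violated). The paper's argument at this point is different in an essential way: it does not bound $|f(A)\setminus B^{\mathrm d}|$ unconditionally. Instead it assumes, for contradiction, that $f(A)\cap B^{\mathrm d}$ is finite, uses $\psi(Y)\le\kappa$ to write this finite set as $\bigcap\V$ for a family $\V$ of at most $\kappa$ open sets, and then expresses $f(A)\setminus B^{\mathrm d}=\bigcup_{V\in\V}(f(A)\setminus V)$ as a union of at most $\kappa$ \emph{closed} subspaces of $f(A)$. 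Closed subspaces do inherit $l\le l(f(A))\le l(A)\le\kappa$, so each piece is covered by $\kappa$ witnessing neighbourhoods and the count $\kappa\cdot\kappa\cdot\kappa=\kappa$ gives the contradiction; this yields at least two (in fact infinitely many) condensation points in $f(A)$, after which your construction of $A_0,A_1$ and the rest of the scheme goes through. To fix your proof you must route the Lindel\"of extraction through closed (or $F_\sigma$) subsets only, which is precisely what the pseudocharacter hypothesis buys you.
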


Theorem~\ref{l:key} will be proved in Section~\ref{s:key}. Now we
apply this theorem to prove the following characterization.

\begin{theorem}\label{t:KA} For  a  functionally  Hausdorff space
$X$  with    $\alpha(X)<\mathfrak  c$,   the   following
conditions are equivalent:

\begin{enumerate}

\item $X$ is $k$-scattered;

\item  For  any  continuous  map  $f:X\to  Y$  to  a functionally
Hausdorff  space   $Y$,   the   image   $f(X)$   has  cardinality
not exceeding $\max\{\alpha(X),\psi(Y)\}$.

\item For any continuous map $f:X\to Y$  to  a  metrizable  space
$Y$,  the  image  $f(X)$  has  cardinality  strictly  less   than
$\mathfrak c$.

\item No continuous map $f:X\to[0,1]$ is surjective.

\end{enumerate}

\end{theorem}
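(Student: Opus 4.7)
The plan is to establish the cycle $(1)\Ra(2)\Ra(3)\Ra(4)\Ra(1)$. The first implication will combine Theorem~\ref{l:key} with the compact case (Theorem~\ref{t:compact}), and the last, which I expect to be the main obstacle, will be handled by a diagonal embedding into a Tychonoff cube followed by a Tietze extension.

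For $(1)\Ra(2)$, by the definition of $\alpha(X)$ I would choose a \v Cech-complete space $P$ with $l(P)\le\alpha(X)$ together with a continuous surjection $q\colon P\to X$. Given a continuous $f\colon X\to Y$ with $Y$ functionally Hausdorff, I would suppose for contradiction that $|f(X)|>\max\{\alpha(X),\psi(Y)\}$ and apply Theorem~\ref{l:key} to $f\circ q\colon P\to Y$: this produces a compact $K\subset P$ for which $f(q(K))=(f\circ q)(K)$ is non-scattered. But $q(K)$ is a compact subset of $X$, hence scattered by~(1). Theorem~\ref{t:compact} says that a scattered compact Hausdorff space admits no continuous surjection onto $[0,1]$, while the non-scattered compact Hausdorff space $f(q(K))$ does admit one; composing with $f|_{q(K)}$ would then give a continuous surjection $q(K)\to[0,1]$, a contradiction.

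The implications $(2)\Ra(3)$ and $(3)\Ra(4)$ are immediate: a metrizable $Y$ is first countable, so $\psi(Y)=\w\le\alpha(X)$ and $|f(X)|\le\alpha(X)<\mathfrak c$; and $|[0,1]|=\mathfrak c$ obstructs surjectivity in~(4).

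The main obstacle is $(4)\Ra(1)$: even if some compact $K\subset X$ surjects onto $[0,1]$, there is no evident way to lift that surjection to all of $X$ under only a functional Hausdorffness assumption on $X$, since $X$ is not assumed normal or even Tychonoff. My plan is to sidestep the extension problem by first enlarging the ambient space to a Tychonoff cube. Assume $X$ is not $k$-scattered, pick a compact non-scattered $K\subset X$, and set $I=[0,1]$. Because $C(X,I)$ separates points of $X$ by functional Hausdorffness, the diagonal map $\Delta\colon X\to I^{C(X,I)}$ is a continuous injection. Its restriction to $K$ is a continuous injection from a compact Hausdorff space to a Hausdorff space, hence a homeomorphism onto its image, so $\Delta(K)$ is a compact non-scattered subset of the Tychonoff cube $I^{C(X,I)}$. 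By Theorem~\ref{t:compact} there is a continuous surjection $h\colon\Delta(K)\to[0,1]$, and since the cube is compact Hausdorff and therefore normal, Tietze's theorem extends $h$ to a continuous $\tilde h\colon I^{C(X,I)}\to[0,1]$. Then $\tilde h\circ\Delta\colon X\to[0,1]$ is continuous and its image contains $h(\Delta(K))=[0,1]$, so it is surjective, contradicting~(4).
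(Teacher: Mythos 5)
Your proposal is correct and follows essentially the same route as the paper: the implication $(1)\Ra(2)$ via Theorem~\ref{l:key} applied to the composition with a parametrizing \v Cech-complete space, and $(4)\Ra(1)$ via the diagonal embedding of $X$ into a compact Hausdorff space (the cube $[0,1]^{C(X,[0,1])}$), restriction to $K$ being a closed embedding, and a Tietze--Urysohn extension of the surjection $K\to[0,1]$ supplied by Theorem~\ref{t:compact}. No gaps.
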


\begin{proof}  $(1)\Ra(2)$  Assume that every compact subspace of
$X$ is scattered. To derive a contradiction, assume that for some
continuous  map $f:X\to Y$ to a functionally Hausdorff space $Y$,
the  image   $f(X)$   has  $|f(X)|>\max\{\alpha(X),\psi(Y)\}$.  
By the definition of  the  cardinal
$\alpha(X)$,  there exists a continuous surjective map $g:P\to X$
from  a  \v  Cech-complete   space  $P$  such that
$l(P)=\alpha(X)$.                  Since                 $|f\circ
g(P)|=|f(X)|>\max\{\alpha(X),\psi(Y)\}=\max\{l(P),\psi(Y)\}$,  we
can apply Lemma~\ref{l:key}, and find a compact subset  $K\subset
P$  whose  image $f\circ g(K)$ is not scattered. Then the compact
subset $g(K)$ of $X$ is not scattered, too. \smallskip

The implications $(2)\Ra(3)\Ra(4)$ are trivial.
\smallskip

$(4)\Ra(1)$ Assume that  $X$  contains  a  non-scattered  compact
subset  $K\subset  X$. By Theorem~\ref{t:compact}, there exists a
continuous surjective  map  $f:K\to[0,1]$.  The  space $X$, being
functionally Hausdorff, admits a continuous injective map $g:X\to
Y$ to a compact Hausdorff space $Y$. By the compactness  of  $K$,
the restriction $g{\restriction}K:K\to Y$ is a closed topological
embedding.  By  the Tietze-Urysohn Theorem \cite[2.1.8]{Eng}, the
continuous map  $f\circ (g{\restriction}K)^{-1}:g(K)\to[0,1]$ has
a   continuous   extension   $\varphi:Y\to[0,1]$.   Then    $\bar
f=\varphi\circ  g:X\to[0,1]$ is a continuous extension of the map
$f$. The surjectivity of  $f$  implies  the surjectivity of $\bar
f$.  \end{proof}

Theorem~\ref{t:KA} has the following corollary extending the classical result of Souslin on cardinality of analytic spaces, see \cite[14.13]{Ke}.

\begin{corollary}  Any  functionally  Hausdorff  space  $X$   has
cardinality      
$$\mbox{$|X|\le\max\{\alpha(X),\psi(X)\}$     or
$|X|\ge\mathfrak c$}.$$ 

In  particular, any analytic functionally
Hausdorff space $X$ has  cardinality  $|X|\in\w\cup\{\w,\mathfrak
c\}$. \end{corollary}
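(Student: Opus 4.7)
The plan is to apply Theorem~\ref{t:KA} directly to $X$ itself, via the identity map $\mathrm{id}_X:X\to X$ (which is legitimate since $X$ is functionally Hausdorff), with a case split. For the main inequality, first suppose $\alpha(X)\ge\mathfrak c$; the general bound $\alpha(X)\le|X|+\w$ recorded earlier in the paper then forces $|X|\ge\mathfrak c$, and we are done. Otherwise $\alpha(X)<\mathfrak c$ and Theorem~\ref{t:KA} is applicable to $X$. If $X$ is $k$-scattered, the implication (1)$\Ra$(2) with $Y:=X$ and $f:=\mathrm{id}_X$ yields $|X|=|f(X)|\le\max\{\alpha(X),\psi(X)\}$. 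If $X$ is not $k$-scattered, then by the equivalence (1)$\Leftrightarrow$(4) condition (4) must fail, producing a continuous surjection $X\to[0,1]$; hence $|X|\ge\mathfrak c$.

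For the ``In particular'' clause, write $X=g(P)$ with $g$ continuous surjective and $P$ Polish with countable base $\{B_n\}_{n\in\w}$. Then $\alpha(X)=\w<\mathfrak c$ and $|X|\le|P|\le\mathfrak c$. Moreover $\{g(B_n)\}_{n\in\w}$ is a countable network for $X$: for open $U\subseteq X$ and $x\in U$, any $p\in g^{-1}(x)$ lies in some $B_n\subseteq g^{-1}(U)$, giving $x\in g(B_n)\subseteq U$. In particular $X$ is hereditarily Lindel\"of. I would then prove $\psi(X)\le\w$ using functional Hausdorffness: fix $x\in X$, and for each $y\ne x$ pick $f_y\in C(X,[0,1])$ with $f_y(x)=0$, $f_y(y)=1$, and set $V_y:=f_y^{-1}((1/2,1])$. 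These opens cover the Lindel\"of subspace $X\setminus\{x\}$ (since $y\in V_y$), so a countable subcover $\{V_{y_n}\}_{n\in\w}$ exists; the matching opens $U_n:=f_{y_n}^{-1}([0,1/2))$ all contain $x$ and are disjoint from $V_{y_n}$, so $\bigcap_{n\in\w}U_n=\{x\}$. Substituting $\alpha(X)=\psi(X)=\w$ into the main inequality now gives $|X|\le\w$ or $|X|\ge\mathfrak c$, and combined with $|X|\le\mathfrak c$ this delivers the claimed trichotomy $|X|\in\w\cup\{\w,\mathfrak c\}$.

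The nontrivial work is concentrated in the pseudocharacter bound $\psi(X)\le\w$; without it the main inequality applied to analytic $X$ would only give $|X|\le\max\{\w,\psi(X)\}$, which a priori does not rule out $|X|$ strictly between $\w$ and $\mathfrak c$. I expect the hereditary Lindel\"of argument above—relying on the countable network induced by $g$ together with a covering argument exploiting functional Hausdorffness—to be the main obstacle.
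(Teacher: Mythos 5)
Your proof is correct and follows the route the paper intends: the dichotomy is exactly the direct application of Theorem~\ref{t:KA} to $f=\mathrm{id}_X$ (with the easy case $\alpha(X)\ge\mathfrak c$ disposed of via $\alpha(X)\le|X|+\w$), and the paper states the corollary without further argument. Your verification that an analytic functionally Hausdorff space satisfies $\psi(X)\le\w$ (via the countable network and the covering of $X\setminus\{x\}$ by the sets $f_y^{-1}((1/2,1])$) correctly supplies the one detail the paper leaves implicit in the ``in particular'' clause.
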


For  $F_{\sigma\delta}$-subsets  of compact Hausdorff spaces, the
following corollary of Theorem~\ref{t:KA}  is known, see, e.g., 
\cite[Problem 262]{Tkachuk}.

\begin{corollary}\label{c2} A functionally  Hausdorff  K-analytic
space  $X$  is at most countable if and only if $X$ has countable
pseudocharacter and  every  compact  subset  of  $X$  is  at most
countable. \end{corollary}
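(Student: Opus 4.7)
The plan is to apply Theorem~\ref{t:KA} to the identity map $X\to X$. The forward direction is immediate: a countable functionally Hausdorff space is $T_1$, hence each singleton $\{x\}$ equals the countable intersection $\bigcap_{y\ne x}(X\setminus\{y\})$ of open sets, giving $\psi(X)\le\w$, and trivially every compact subset is at most countable.

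For the nontrivial direction, suppose $X$ is K-analytic and functionally Hausdorff with $\psi(X)\le\w$ and every compact subset of $X$ at most countable. First I would verify that $X$ is $k$-scattered: any compact subset $K\subset X$ is a countable compact Hausdorff space, so it admits no continuous surjection onto $[0,1]$ (which has cardinality $\mathfrak c$), and hence Theorem~\ref{t:compact} applied to $K$ yields that $K$ is scattered. Thus condition~(1) of Theorem~\ref{t:KA} holds for $X$.

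Since $X$ is K-analytic, $\alpha(X)=\w<\mathfrak c$, so Theorem~\ref{t:KA} applies and its condition~(2) also holds. Taking $Y=X$ (which is functionally Hausdorff) and $f=\mathrm{id}_X$, condition~(2) gives $|X|\le\max\{\alpha(X),\psi(X)\}\le\w$, so $X$ is at most countable, as required.

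There is no genuine obstacle here; given Theorem~\ref{t:KA}, the corollary is essentially a bookkeeping step. The only point worth flagging is that one must first promote \emph{``every compact subset is at most countable''} to \emph{``every compact subset is scattered''} via Theorem~\ref{t:compact}, in order to be entitled to invoke Theorem~\ref{t:KA}.
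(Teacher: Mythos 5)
Your argument is correct and is exactly the intended derivation: the paper states Corollary~\ref{c2} as an immediate consequence of Theorem~\ref{t:KA} without writing out the details, and your bookkeeping (upgrading ``compact subsets are countable'' to ``$k$-scattered'' via Theorem~\ref{t:compact}, then applying the implication $(1)\Rightarrow(2)$ of Theorem~\ref{t:KA} to the identity map) fills them in faithfully. No issues.
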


Looking at Corollary~\ref{c2} it is  natural  to  ask  \cite{MO2}
whether  the  countability  of the pseudocharacter can be removed
from this characterization.  It turns  out  that this can be done
if  and  only  if  $\w_1<\mathfrak  b$.  We   recall   \cite{vD},
\cite{Vau},  \cite{Blass}  that  $\mathfrak  b$  is  the smallest
cardinality of a subset $B\subset\w^\w$ such that for every $y\in
\w^\w$ there exists  $x\in  B$  such  that  $x\not\le^* y$, where
$x\le^* y$ means that the set  $\{n\in\w:x(n)\not\le  y(n)\}$  is
finite.   It  is  well-known  \cite{vD}, \cite{Vau}, \cite{Blass}
that $\w_1<\mathfrak  b=\mathfrak  c$  under  MA$+\neg$CH. On the
other hand, $\w_1=\mathfrak b=\mathfrak c$ under CH.

The equivalence $(1)\Leftrightarrow(2)$ in the following  theorem
was proved by Fremlin \cite{Fremlin}.

\begin{theorem} The following conditions are equivalent:
\begin{enumerate} 
\item $\w_1<\mathfrak b$.
\item A K-analytic Hausdorff space $X$ is analytic iff  every compact subset of $X$ is metrizable.
\item A K-analytic space $X$ is countable iff every compact subset of $X$ is countable.
\item A K-analytic Hausdorff space $X$ with a unique non-isolated point is countable iff every compact subset of $X$ is countable.
\end{enumerate}
\end{theorem}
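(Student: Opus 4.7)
Since the equivalence $(1)\Leftrightarrow(2)$ is Fremlin's theorem, my plan is to close the cycle $(1)\Ra(3)\Ra(4)\Ra(1)$. The implication $(3)\Ra(4)$ is immediate, because (4) is just (3) restricted to Hausdorff spaces with a unique non-isolated point. Hence only $(1)\Ra(3)$ and the contrapositive of $(4)\Ra(1)$ require real work, and I expect the latter to be the serious one.

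For $(1)\Ra(3)$ I would take a K-analytic Hausdorff space $X$ all of whose compact subsets are countable. By the Mazurkiewicz--Sierpi\'nski theorem every countable compact Hausdorff space is homeomorphic to a countable successor ordinal and is therefore metrizable; consequently every compact subset of $X$ is metrizable. Invoking Fremlin's implication $(1)\Ra(2)$ under the hypothesis $\w_1<\mathfrak b$, I conclude that $X$ is analytic. By the classical Suslin theorem (\cite[14.13]{Ke}), any uncountable analytic Hausdorff space contains a homeomorphic copy of $2^\w$; such a copy would be an uncountable compact subset of $X$, contradicting the hypothesis. Therefore $|X|\le\w$.

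For $(4)\Ra(1)$ I would argue contrapositively: assuming $\w_1=\mathfrak b$, I would construct a K-analytic Hausdorff space $X$ of cardinality $\w_1$ that has a unique non-isolated point $\infty$ and whose compact subsets are all countable. Starting from a $\mathfrak b$-scale, i.e.\ a $\leq^*$-increasing unbounded family $(f_\alpha)_{\alpha<\w_1}\subset\w^\w$, I would take $X=\w_1\cup\{\infty\}$ with every countable ordinal isolated, and define the neighborhood filter of $\infty$ in terms of the scale so that each set $A_g=\{\alpha<\w_1:f_\alpha\leq^*g\}$ is a countable initial segment of $\w_1$ excluded by the corresponding basic neighborhood of $\infty$. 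Unboundedness of the scale gives $\sup_g\sup A_g=\w_1$, and a diagonal argument over countable ordinals then forces any compact $K\subset X$ containing $\infty$ to meet each $A_g$ in a finite set, which together with the cofinality of the $A_g$ in $\w_1$ forces $|K|\le\w$. K-analyticity of $X$ is to be witnessed by constructing a continuous surjection onto $X$ from a Lindel\"of \v Cech-complete space built from $\w^\w$ and $\w_1+1$.

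The main obstacle is this construction. The neighborhood filter at $\infty$ must be simultaneously \emph{sparse} enough to admit a compact-valued upper semi-continuous map from $\w^\w$ covering $X$ (needed for K-analyticity) and \emph{rich} enough to prevent any uncountable subset of $\w_1$ from accumulating at $\infty$ (needed for countability of all compact subsets). Under $\w_1=\mathfrak b$ a $\mathfrak b$-scale calibrates precisely these two competing demands, and I would model the detailed verification on a one-point refinement of the counterexample Fremlin uses in \cite{Fremlin} to refute (2) under $\w_1=\mathfrak b$.
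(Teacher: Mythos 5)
Your reduction of the problem to $(1)\Ra(3)$ and $(4)\Ra(1)$ matches the paper's, but your route to $(1)\Ra(3)$ is genuinely different: you feed the hypothesis into Fremlin's implication $(1)\Ra(2)$ (via the standard fact that countable compact Hausdorff spaces are metrizable) and then invoke the perfect set property for uncountable analytic Hausdorff spaces. This works, with two caveats: the paper's statement of (3) does not assume $X$ Hausdorff, and your argument needs Hausdorffness twice (for Fremlin and for the perfect set property), whereas the paper gives a short self-contained argument that avoids it --- writing $P=\bigcap_n\bigcup_m K_{n,m}$, covering $X$ by the countable compacta $f(K_s)$, $s\in\w^\w$, and using $\mathfrak b>\w_1$ to dominate $\w_1$ chosen indices $s_\alpha$ by countably many functions, so that one $f(K_t)$ would be uncountable. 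Also, the perfect set theorem you cite from \cite[14.13]{Ke} is stated there for analytic subsets of Polish spaces; for Hausdorff continuous images of Polish spaces it is still true (by a Cantor-scheme argument as in Section~\ref{s:key}), but it deserves a correct reference or a proof.

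The implication $(4)\Ra(1)$ is where your proposal has a genuine gap, and in fact the design decision you commit to is provably fatal. You require that each countable initial segment $A_g=\{\alpha:f_\alpha\le^* g\}$ be \emph{excluded by} a basic neighborhood of $\infty$; since the $A_g$ are cofinal in the countable subsets of $\w_1$, this forces your topology to refine the one-point Lindel\"ofication $L(\w_1)$ of a discrete set of size $\w_1$ (every co-countable set becomes a neighborhood of $\infty$). But $L(\w_1)$ is not Lindel\"of $\Sigma$ (given a countable network $\mathcal N$ modulo a cover by the necessarily finite compacta, one extracts for each $\alpha$ a \emph{finite} member of $\mathcal N$ containing $\alpha$, which is impossible for uncountably many $\alpha$), hence not K-analytic; and since K-analyticity is preserved by continuous images and the identity map from your space onto $L(\w_1)$ is continuous, no topology satisfying your requirement can be K-analytic. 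So the tension you correctly identify cannot be "calibrated" within your framework --- the neighborhood filter at $\infty$ must be strictly \emph{coarser} than co-countable. The paper's resolution is to realize the scale as a subset $Y=\{y_\alpha\}_{\alpha<\w_1}\subset\w^\w$ with $\{\alpha:y_\alpha\le y\}$ countable for every $y$, put $X=\{\infty\}\cup Y$, and declare the neighborhoods of $\infty$ to be the complements of the subsets of $Y$ that are closed and discrete \emph{in $\w^\w$}. With exactly this filter the tautological compact-valued map $\Phi(x)=\{\infty,x\}\cap X$ is upper semi-continuous (because the complement of each basic neighborhood is closed in $\w^\w$), so $X$ is K-analytic; yet any uncountable $K\cap Y$ has non-compact closure in $\w^\w$ (compacta of $\w^\w$ meet $Y$ countably) and therefore contains an infinite set that is closed and discrete in $\w^\w$, hence in $X$, contradicting compactness of $K$. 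Your proposal does not contain this idea, and without it the construction does not go through.
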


\begin{proof} Since  $(1)\Leftrightarrow(2)$  
was  established by
Fremlin    \cite{Fremlin},    it    remains    to    prove   that
$(1)\Ra(3)\Ra(4)\Ra(1)$. The implication $(3)\Ra(4)$ is trivial.

\smallskip
$(1)\Ra(3)$.   Assume   that  $\w_1<\mathfrak  b$  and  take  any
K-analytic space $X$ such  that  every  compact  subset of $X$ is
countable. By definition, the K-analytic space $X$ is  the  image
of  a  Lindel\"of  \v  Cech-complete space $P$ under a continuous
surjective map $f:P\to X$. Let  $\bar P$ be a compactification of
$P$ and let $(W_n)_{n\in\w}$ be a  decreasing  sequence  of  open
sets in $\bar P$ such that $P=\bigcap_{n\in\w}W_n$.  Since $X$ is
Lindel\"of,   we   can   assume  that  each  open  set  $W_n$  is
$\sigma$-compact.   So,   $W_n=\bigcup_{m\in\w}K_{n,m}$   for  an
increasing sequence $(K_{n,m})_{m\in\w}$ of  compact  subsets  of
$\bar  P$.   For every infinite sequence $s\in\w^\w$ consider the
compact   set  $K_s=\bigcap_{n\in\w}K_{n,s{\restriction}n}\subset
\bigcap_{n\in\w}W_n=P$  and  observe  that  $X=f(P)=\bigcup_{s\in
\w^\w}f(K_s)$, and $K_s\subset K_t$ for any sequences $s\le t$ in
$\w^\w$. By our assumption, all compact sets in $X$ are  at  most
countable.   Consequently, for every $s\in \w^\w$ the compact set
$f(K_s)$ is at most countable.

Assuming that $X$  is  uncountable,  we can construct transfinite
sequences of points $\{s_\alpha\}_{\alpha<\w_1}\subset \w^\w$ and
$\{x_\alpha\}_{\alpha\in\w^\w}\subset X$ such  that  $x_\alpha\in
f(K_{s_\alpha})\setminus    \bigcup_{\beta<\alpha}f(K_{s_\beta})$
for every $\alpha<\w_1$.

The  definition  of  the cardinal $\mathfrak b>\w_1$ implies that
there exists a countable subset $T\subset\w^\w$ such that for any
$\alpha<\w_1$ there exists $t\in T$  with $s_\alpha\le t$. By the
Pigeonhole   Principle,   for   some    $t\in    T$    the    set
$\Omega=\{\alpha<\w_1:s_\alpha\le  t\}$  is uncountable. Then the
countable   set   $f(K_t)$    contains    the   uncountable   set
$\bigcup_{\alpha\in\Omega}f(K_{x_\alpha})\supset
\{x_\alpha:\alpha\in\Omega\}$, which is a desired  contradiction,
showing that $X$ is at most countable. \smallskip

$(4)\Ra(1)$  Assuming  that  $\w_1=\mathfrak  b$,  we  can find a
transfinite  sequence  $\{x_\alpha\}_{\alpha\in\w_1}\subset\w^\w$
such that for every $y\in\w^\w$ there exists $\alpha\in\w_1$ such
that $x_\alpha\not\le^*  y$.  Using  the  definition of
$\mathfrak  b$,  for  every  $\alpha<\w_1$  choose   a   function
$y_\alpha\in\w^\w$   such   that   $x_\alpha\le   y_\alpha$   and
$x_\beta\le  y_\beta\le^*  y_\alpha$ for all $\beta<\alpha$. Then
for every $y\in\w^\w$  the  set $\{\alpha\in\w_1:y_\alpha\le y\}$
is  countable,  which  implies  that  for  every  compact  subset
$K\subset\w^\w$                 the                  intersection
$K\cap\{y_\alpha\}_{\alpha\in\w_1}$ is countable.

Let  $Y=\{y_\alpha\}_{\alpha\in\w_1}$  and  $X=\{\infty\}\cup  Y$
where $\infty\notin Y$ is any point. Endow $X$ with the Hausdorff
topology     generated    by    the    base    $$\big\{\{y\}:y\in
Y\big\}\cup\{X\setminus D:D\subset Y\mbox{ is closed and discrete
in }\w^\w\}$$and observe that  $\infty$  is a unique non-isolated
point of $X$, which implies that the space $X$ is  (hereditarily)
normal.
  
Consider   the   compact-valued   function  
$$\Phi:\w^\w\multimap
X,\;\;\Phi:x\mapsto \{\infty,x\}\cap X. $$ 
and observe that it is
upper semi-continuous  in  the  sense  that  for  every  open set
$U\subset X$ the set $\{x\in \w^\w:\Phi(x)\subset U\}$ is open in
$\w^\w$. Since $X=\bigcup_{x\in\w^\w}\Phi(x)$, the space  $X$  is
K-analytic, see \cite[3.1]{Hansell}.

We  claim that every compact subset $K\subset X$ is countable. To
derive a  contradiction,  assume  that  $K$  is uncountable. Then
$K\cap Y$ is uncountable and the closure $\overline{K\cap Y}$  of
$K\cap  Y$  in $\w^\w$ is not compact (as the intersection of $Y$
with  any  compact  subset   of   $\w^\w$  is  countable).  Since
$\overline{K\cap Y}$ is not compact,  there  exists  an  infinite
subset  $D\subset  K\cap  Y$,  which  is  closed  and discrete in
$\w^\w$. The definition of the  topology  of $X$ ensures that the infinite 
set $D\subset K$ is closed and discrete  in  $X$,  which  is  not
possible as $K$ is compact.  \end{proof}

\section{Proof of Theorem~\ref{l:key}}\label{s:key}

Let  $f:X\to  Y$  be  a  continuous  surjective  map  from  a  \v
Cech-complete  space  $X$ into a functionally Hausdorff space $Y$
and  $\kappa=\max\{l(X),\psi(Y)\}$.  The   space  $X$,  being  \v
Cech-complete, is a $G_\delta$-set in some compactification $\bar
X$  of   $X$.   Then   there   exists   a   decreasing   sequence
$(W_n)_{n\in\w}$   of   open   sets   in   $\bar   X$  such  that
$X=\bigcap_{n\in\w}W_n$ and $W_0=\bar X$.

Let $2=\{0,1\}$ and  $2^{<\w}=\bigcup_{n\in\w}2^n$  be the family
of  all  finite  binary  sequences.   For   a   binary   sequence
$s=(s_0,\dots,s_{n-1})\in  2^{<\w}$ and a number $i\in\{0,1\}$ by
$s\hat{\;}i$ we denote the sequence $(s_0,\dots,s_{n-1},i)$.

Assuming that $|f(X)|>\kappa$,  we  shall inductively construct a
family $(U_s)_{s\in 2^{<\w}}$ of open  $\sigma$-compact  sets  in
$\bar  X$  such  that  for  every  $n\in\w$  and $s\in 2^{n}$ the
following conditions are satisfied:

\begin{itemize}

\item[$(a_s)$] $|f(X\cap U_s)|>\kappa$;

\item[$(b_s)$]
$\overline{U}_{s\hat{\;}0}\cup\overline{U}_{s\hat{\;}1}\subset
U_s\subset W_n$;

\item[$(c_s)$]
$\overline{U}_{s\hat{\;}0}\cap\overline{U}_{s\hat{\;}1}=\emptyset$;

\item[$(d_s)$]  $f(X\cap  \overline{U}_{s\hat{\;}0})\cap f(X\cap
\overline{U}_{s\hat{\;}1})=\emptyset$.

\end{itemize}

We start the inductive  construction letting $U_\emptyset=\bar X$
for the unique element $\emptyset\in 2^0$. Assume that  for  some
$n\in\w$   and   $s\in   2^n$   we   have   constructed  an  open
$\sigma$-compact set $U_s\subset\bar  X$ satisfying the condition
$(a_s)$. We shall construct  two  open  $\sigma$-compact  subsets
$U_{s\hat{\;}0},U_{s\hat{\;}1}$    in    $U_s$   satisfying   the
conditions     $(b_s),(c_s),(d_s)$     and    $(a_{s\hat{\;}0})$,
$(a_{s\hat{\;}1})$.  

Let $Y_{s}\subset Y$ be the set  of points $y\in Y$ possessing an
open neighborhood $O_y\subset  Y$  such  that  the  set  $O_y\cap
f(X\cap U_s)$ has cardinality $\le \kappa$. We claim that the set
$f(X)\setminus  Y_s$  is  infinite.  To  derive  a contradiction,
assume that the  set  $f(X)\setminus  Y_s$  is  finite. Since the
space  $Y$  is  Hausdorff with $\psi(Y)\le\kappa$, there exists a
family $\mathcal V$ of open sets in $Y$ such that $|\V|\le\kappa$
and  $\bigcap\V=f(X)\setminus  Y_s$.    Observe  that  for  every
$V\in\V$, for the closed subspace $f(X)\setminus V$ of the  space
$f(X)$ we have $l(f(X)\setminus V)\le l(f(X))\le l(X)\le \kappa$.
Then  there  exists  a  set  $Y_V\subset  f(X)\setminus  V\subset
Y_s\setminus V$ of cardinality $|Y_V|\le l(f(X)\setminus V)$ such
that   $f(X)\setminus   V\subset\bigcup_{y\in   Y_V}O_y$.    Then
$$\kappa<|f(X\cap   U_s)|=|f(X\cap   U_s)\setminus  Y_s|+|f(X\cap
U_s)\cap    Y_s|\le|f(X)\setminus    Y_s|+\sum_{V\in\V}\sum_{y\in
Y_V}|O_y\cap  f(X\cap  U_s)|\le\kappa,$$   which   is  a  desired
contradiction  proving  that  the  set  $f(X)\setminus  Y_s$   is
infinite.

Since  $Y$  is functionally Hausdorff, we can choose two distinct
points $y_0,y_1\in f(X)\setminus S$  and two open $F_\sigma$-sets
$V_0,V_1\subset             Y$             such              that
$\overline{V}_0\cap\overline{V}_1=\emptyset$ and $y_i\in V_i$ for
$i\in\{0,1\}$.   For   every   $i\in\{0,1\}$,  the  non-inclusion
$y_i\notin Y_s$ ensures that  $|V_i\cap f(X\cap U_s)|>\kappa$. It
follows  that   $V_{s,i}:=U_s\cap   f^{-1}(V_i)$   is   an   open
$F_\sigma$-set  in the open $F_\sigma$-subset $X\cap U_s$ of $X$.
Consequently, $l(V_{s,i})\le l(X)\le\kappa$.  Find an open subset
$W_{s,i}\subset U_s\cap W_{n}$ such that $V_{s,i}=X\cap W_{s,i}$.
Consider the closure $\overline{W}_{s,i}$ of $W_{s,i}$  in  $\bar
X$    and    observe    that   $f(X\cap\overline{W}_{s,i})\subset
\overline{V}_{i}$,          which           implies          that
$\overline{W}_{s,0}\cap\overline{W}_{s,1}\cap       X=\emptyset$.
Replacing    the    sets    $W_{s,0}$    and     $W_{s,1}$     by
$W_{s,0}\setminus\overline{W}_{s,1}$                          and
$W_{s,1}\setminus\overline{W}_{s,0}$, respectively, we can assume
that
$$W_{s,0}\cap\overline{W}_{s,1}=
\emptyset=W_{s,1}\cap\overline{W}_{s,0}.$$

Since  the  space  $\bar  X$  is compact and Hausdorff, for every
point $x\in  V_{s,i}$  we  can  choose  an  open $\sigma$-compact
neighborhood  $O_x$  in  $\bar  X$  such  that  $\bar  O_x\subset
W_{s,i}$.  Since  $l(V_{s,i})\le  \kappa$,  there  exists  a  set
$X_{s,i}\subset V_{s,i}$ of cardinality $|X_{s,i}|\le\kappa$ such
that   $V_{s,i}\subset\bigcup_{x\in   X_{s,i}}O_x$   and    hence
$V_{s,i}=\bigcup_{x\in  X_{s,i}}X\cap O_x$. Then $V_i\cap f(X\cap
U_s)=f(V_{s,i})=\bigcup_{x\in   X_{s,i}}f(X\cap    O_x)$.   Since
$|V_i\cap f(X\cap U_s)|>\kappa\ge|X_{s,i}|$, there exists a point
$x_{s,i}\in X_{s,i}$ such that  $|f(X\cap  O_{x_{s,i}})|>\kappa$.
Put  $U_{s\hat{\;}i}:=O_{x_{s,i}}$,  and  observe  that  the sets
$U_{s\hat{\;}0}$  and  $U_{s\hat{\;}1}$  satisfy  the  conditions
$(b_s),(c_s),(d_s)$ and  $(a_{s\hat{\;}0})$,  $(a_{s\hat{\;}1})$.
\smallskip

After completing the inductive construction, consider the compact
set    $K=\bigcap_{n\in\w}\bigcup_{s\in    2^n}\bar    U_s\subset
\bigcap_{n\in\w}W_{n+1}=X$.  It  remains  to prove that the image
$f(K)\subset Y$ is not scattered.

The conditions $(b_s)$,  $(c_s)$,  of  the inductive construction
imply that for  every  point  $x\in  K$  there  exists  a  unique
sequence          $s_x\in         2^\w$         such         that
$x \in \bigcap_{n\in\w} \overline{U}_{s_x{\restriction}n}= 
\bigcap_{n\in\w}U_{s{\restriction}n}$.
It follows that  the  map  $\varphi:K\to 2^\w$, $\varphi:x\mapsto
s_x$, is continuous and surjective. We claim that for any $x,y\in
K$ with $\varphi(x)\ne\varphi(y)$ the points  $f(x)$  and  $f(y)$
are  distinct.   Since  $s_x=\varphi(x)\ne\varphi(y)=s_y$,  there
exists        a       unique       $n\in\w$       such       that
$s_x{\restriction}n=s_y{\restriction}n$  but  $s_x(n)\ne s_y(n)$.
The        inductive        condition         $(d_t)$         for
$t=s_x{\restriction}n=s_y{\restriction}n$       ensures      that
$\{f(x)\}\cap\{f(y)\}\subset  f(X\cap   \bar  U_{t\hat{\;}0})\cap
f(X\cap  \bar  U_{t\hat{\;}1})=\emptyset$,   which   means   that
$f(x)\ne f(y)$. Then there exists a unique function $\phi:f(K)\to
2^\w$ such that $\varphi=\phi\circ f{\restriction}K$. Taking into
account  that  the  map  $\varphi$  is  continuous  and  the  map
$f{\restriction}K:K\to  f(K)$  is closed (and hence quotient), we
conclude that the  map  $\phi:f(K)\to  2^\w$  is continuous.  So,
$f(K)$ admits a continuous map onto $2^\w$  and  hence  admits  a
surjective      continuous      map     onto     $[0,1]$.      By
Theorem~\ref{t:compact},  the   compact   space   $f(K)$  is  not
scattered.

\section{Acknowledgement}
 
The  first  author  would  like  to  express  his  thanks to Saak
Gabriyelyan  whose  stimulating  questions  lead  to  the results
presented in this paper. These results are exploited in the joint
paper \cite{BG}.

\end{document}